\newtheorem{theorem}{Theorem}[section]
\theoremstyle{definition}
\newtheorem{example}[theorem]{Example}
\theoremstyle{remark}
\newtheorem{remark}[theorem]{Remark}
\numberwithin{equation}{section}
\renewcommand{\epsilon}{\varepsilon}
\newcommand{\N}{\mathbb{N}}
\newcommand{\Z}{\mathbb{Z}}
\newcommand{\R}{\mathbb{R}}
\newcommand{\C}{\mathbb{C}}
\newcommand{\txte}{\text{e}}
\newcommand{\txtd}{\text{d}}
\newcommand{\dist}{\operatorname{dist}}
\begin{document}

\title[Slow manifolds in infinite dimensions]{Connecting a direct and a Galerkin approach to slow manifolds in infinite dimensions}

\author{Maximilian Engel}
\address{Department of Mathematics and Computer Science, Freie Universit\"{a}t Berlin, 14195 Berlin, Germany}
\email{maximilian.engel@fu-berlin.de}

\thanks{ME was supported by Germany's Excellence Strategy -- The Berlin Mathematics Research Center MATH+ (EXC-2046/1, project ID: 390685689).}

\author{Felix Hummel}
\address{Faculty of Mathematics, Technical University 
of Munich, 85748 Garching b.~M\"unchen, Germany}
\email{hummel@ma.tum.de}
\thanks{FH acknowledges partial support via the SFB/TR109 ``Discretization in Geometry and Dynamics'' as well as partial support of the EU within the TiPES project funded the European Unions Horizon 2020 research and innovation programme un der grant agreement No. 820970.}

\author{Christian Kuehn}
\address{Faculty of Mathematics, Technical University 
of Munich, 85748 Garching b.~M\"unchen, Germany}
\email{ckuehn@ma.tum.de}
\thanks{CK acknowledges support via a Lichtenberg Professorship as well as support via the SFB/TR109 ``Discretization in Geometry and Dynamics'' as well as partial support of the EU within the TiPES project funded the European Unions Horizon 2020 research and innovation programme un der grant agreement No. 820970.}

\thanks{This project is TiPES contribution {\#}80: This project has received funding from the European Union’s Horizon 2020 research and innovation programme under grant agreement No 820970}

\subjclass[2020] {Primary  37L15, 37L25, 37L65; Secondary 34E15, 35K57}

\date{February 10, 2021}


\keywords{Fast-slow systems, reaction-diffusion equations, Galerkin discretization, infinite-dimensional dynamics}

\begin{abstract}
In this paper, we study slow manifolds for infinite-dimensional evolution equations. We compare two approaches: an abstract evolution equation framework and a finite-dimensional spectral Galerkin approximation. We prove that the slow manifolds constructed within each approach are asymptotically close under suitable conditions. The proof is based upon Lyapunov-Perron methods and a comparison of the local graphs for the slow manifolds in scales of Banach spaces. In summary, our main result allows us to change between different characterizations of slow invariant manifolds, depending upon the technical challenges posed by particular fast-slow systems.  
\end{abstract}

\maketitle

\section{Introduction}

The perturbation theory of normally hyperbolic invariant manifolds introduced by Fenichel \cite{Fenichel1,WigginsIM} has proved to be a useful tool in the theory of dynamical systems. One important consequence of Fenichel's works is that they provide a suitable framework for the treatment of fast-slow systems~\cite{Fenichel4,Jones} of the form
\begin{align}\label{Eq:FastSlow}
	\begin{aligned}
		\epsilon \partial_t u^{\epsilon} &= A u^{\epsilon} + f(u^{\epsilon},v^{\epsilon}),\\
				\partial_t v^{\epsilon} &= B v^{\epsilon} + g(u^{\epsilon},v^{\epsilon}),
	\end{aligned}
\end{align}
where $0\leq\epsilon\ll1$ is a small parameter, $A,B$ are matrices, and $f,g$ are differentiable nonlinearities. The unknown functions $u^{\epsilon}$ and $v^{\epsilon}$ are called fast and slow variable, respectively. System~\eqref{Eq:FastSlow} is already written in a variant of (local) Fenichel normal form~\cite{Fenichel4,Jones} separating matrices $A,B$ and the nonlinearities $f,g$, which is also a quite natural form in the PDE context to be considered below. For the classical finite-dimensional case, Fenichel's techniques are also known as geometric singular perturbation theory. The main result is that -- under suitable assumptions -- for all $\epsilon>0$ small enough there is a manifold $S_{\epsilon}$ which is locally invariant under the flow generated by \eqref{Eq:FastSlow} and which can be written as a graph over the slow variable. More precisely, one may write
\[
	S_{\epsilon}:=\{(h^{\epsilon}(v),v): v\in Y\},
\]
where $X$ and $Y$ are the finite-dimensional vector spaces $u^{\epsilon}$ and $v^{\epsilon}$, respectively, take values in, and $h^{\epsilon}\colon Y\to X$ is a Lipschitz continuous function. These manifolds, which are called slow manifolds, are $\epsilon$-close over compact subsets in $Y$ to the critical manifold
\[
	S_{0}:=\{(h^{0}(v),v): v\in Y\},
\]
where $h^0(v)$ denotes the unique solution of 
\[
0 = A h^0(v) + f(h^0(v),v).
\]
Moreover, the flow on $S_{\epsilon}$ converges to the slow flow on $S_0$ which is defined to be the flow which is generated by the singular limit of \eqref{Eq:FastSlow} as $\epsilon\to 0$, that is
\begin{align}\label{Eq:FastSlow=0}
	\begin{aligned}
		0 &= A u^{0} + f(u^{0},v^{0}),\\
				\partial_t v^{0} &= B v^{0} + g(u^{0},v^{0}).
	\end{aligned}
\end{align}
The existence of such slow manifolds is usually taken as a formal justification for the intuitive idea, that after a short initial time the dynamics of \eqref{Eq:FastSlow} only evolve on the slow time scale and are described well by the slow subsystem \eqref{Eq:FastSlow=0}. Since 
\[
0 = A u + f(u,v)
\]
is supposed to have the unique solution $u=h^0(v)$, one may rewrite \eqref{Eq:FastSlow=0} as
\begin{align}\label{Eq:FastSlow_reduced}
	\partial_t v^{0} &= B v^{0} + g(h^0(v^{0}),v^{0}),\quad u^0=h^0(v^0).
\end{align}
Altogether, we can then reduce \eqref{Eq:FastSlow} to \eqref{Eq:FastSlow_reduced}. The advantage of \eqref{Eq:FastSlow_reduced} is that the fast variable is now uniquely determined by the slow variable, i.e., the dimension of the dynamical problem \eqref{Eq:FastSlow} has been reduced.\\
It has been an open problem for a few decades, how to generalize Fenichel theory to the infinite-dimensional setting, with fast-slow systems of partial differential equations as an important application. Even though persistence of normally hyperbolic invariant manifolds in Banach spaces was derived by Bates, Lu and Zeng in \cite{Bates_Lu_Zeng_1998} for bounded semiflow perturbations, the existence of slow manifolds for PDEs, involving spatial differential operators in the slow variable equations, had only been shown in very special cases such as for the Maxwell-Bloch equations~\cite{Menon_Haller_2001}. Recently, there have been two new attempts to provide techniques for a geometric singular perturbation theory in infinite dimensions: In \cite{Engel_Kuehn_2020}, slow manifolds in infinite dimensions were approximated by finite-dimensional slow manifolds within a Galerkin procedure, paving the way for an extension of geometric blow-up from ODEs to PDEs. A more direct approach was taken in \cite{Hummel_Kuehn_2020}, where a two-parameter family $S_{\epsilon,\zeta}$ of slow manifolds was contructed via a Lyapunov-Perron argument. The main ingredient of the latter procedure is a splitting of the slow variable space $Y=Y_{F}^{\zeta}\oplus Y_{S}^{\zeta}$ into a quickly decaying part and a part on which the linear dynamics are invertible. 
We will introduce both approaches in Section~\ref{sec:two_approaches} and provide a precise comparison result in Section~\ref{Sec:MainResult}, relating the two types of slow manifolds to each other via estimates for their distance and its decay in $\epsilon, \zeta$. Finally, in Section~\ref{sec:case_study}, we exemplify this main result at the hand of a slow-fast PDE with fast reaction-diffusion dynamics, also discussing intricacies of the Galerkin limits.

\section{The two approaches}\label{Sec:TwoApproaches}
\label{sec:two_approaches}

\subsection{Assumptions}
In the following, we discuss in detail the assumptions for the subsequent statements. It is, in fact, one of the main difficulties in infinite-dimensional geometric singular perturbation theory to find conditions, which allow for the construction of slow manifolds and are, at the same time, satisfied in many important applications. Although the list of assumptions we impose is quite long, it has already been demonstrated in \cite{Hummel_Kuehn_2020} that the conditions are satisfied for a large class of PDEs, e.g.~reaction-diffusion systems; in comparison to \cite{Hummel_Kuehn_2020}, we add a few assumptions which allow us to trade regularity for better estimates. Moreover, we also add a splitting in the fast variable space so that we can define an appropriate Galerkin approximation. 

In the following, let $n\in\N$.

\subsubsection{Assumption $(A_n)$}\label{Assump:A_n}
We consider the fast-slow system \eqref{Eq:FastSlow} on Banach spaces $X$ and $Y$, supplemented by the initial conditions
\begin{align}\label{Eq:FastSlow:InitialValues}
	u^{\epsilon}(0)=u_0 \in X_n,\quad v^{\epsilon}(0)=v_0 \in Y_n,
\end{align}
where $X_n, Y_n$ are elements of the interpolation-extrapolation scales introduced hereafter (see also Appendix~\ref{sec:int_ext_scales}) and we have $0=Au_0+f(u_0,v_0)$ for $\epsilon =0$. Assume further that the nonlinearities satisfy $f(0,0)=0$ and $g(0,0)=0$.
Then the following conditions ensure that \eqref{Eq:FastSlow} together with \eqref{Eq:FastSlow:InitialValues} has a unique solution $(u^{\epsilon},v^{\epsilon})\in C^{1}([0,\infty);X_{n-1}\times Y_{n-1})\cap C([0,\infty);X_{n}\times Y_{n})$ which is approximated well by the slow flow in a sense which we will make precise later. 
\begin{enumerate}[(i)]
	\item \textbf{Generation of semigroups}: the closed linear operator $A\colon X\supset D(A)\to X$ generates an exponentially stable $C_0$-semigroup $(\txte^{tA})_{t\geq0}\subset\mathcal{B}(X)$ on the Banach space $X$. The closed linear operator $B\colon Y\supset D(B)\to Y$ is the generator of a $C_0$-semigroup $(\txte^{tB})_{t\geq0}\subset\mathcal{B}(Y)$ on the Banach space $Y$.
	 \item \textbf{Generation of Banach scales}: the interpolation-extrapolation scales generated by $(X,A)$ and $(Y,B)$ (see Appendix~\ref{sec:int_ext_scales}) are --- up to uniform equivalence of norms for each fixed $\alpha_0\in[-1,\infty)$ and all $\alpha\in[-1,\alpha_0]$ --- given by $(X_{\alpha})_{\alpha\in[-1,\infty)}$ and $(Y_{\alpha})_{\alpha\in[-1,\infty)}$. If $0\notin\rho(B)$, then $(Y_{\alpha})_{\alpha\in[-1,\infty)}$ shall be equivalent to the interpolation-extrapolation scale generated by $B-\lambda$ for some $\lambda\in\rho(B)$.
	 \item \textbf{Bounded Fr\'{e}chet derivatives}: let $\gamma_X\in(0,1]$ if $(\txte^{tA})_{t\geq0}\subset\mathcal{B}(X)$ is holomorphic and $\gamma_X=1$ otherwise. In addition, we choose $\delta_X\in[1-\gamma_X,1]$. Let further $\delta_Y\in(0,1]$ if $(\txte^{tB})_{t\geq0}\subset\mathcal{B}(Y)$ is holomorphic and $\delta_Y=1$ otherwise. The nonlinearities $f\colon X_{n-1+\delta_X}\times Y_{n-\delta_X}\to X_{n-1}$ and $g\colon X_{n}\times Y_{n}\to Y_{n-1+\delta_Y}$ are continuously differentiable and there are constants $L_f,L_g>0$ (which may depend on $n$) such that      
     \begin{align*}
         \| Df(x,y)\|_{\mathcal{B}(X_n\times Y_n, X_{n-1+\gamma_X})}&<L_f\quad(x\in X_n,y\in Y_n),\\
         \| Df(x,y)\|_{\mathcal{B}(X_{n-1+\delta_X}\times Y_{n-1}, X_{n-2+\delta_X})}&<L_f\quad(x\in X_{X_{n-1+\delta_X}},y\in Y_{n-1}),\\
         \| Dg(x,y)\|_{\mathcal{B}(X_n\times Y_n, Y_{n-1+\delta_Y})}&<L_g\quad(x\in X_n,y\in Y_n).
     \end{align*}
    \item \textbf{Bounds for semigroups}: we choose constants $M_A,M_B,C_A,C_B>0$ (which may depend on $n$) as well as $\omega_A<0$ and $\omega_B\in\R$ (which do not depend on $n$) such that for all $t>0$
 \begin{align*}
 	\|\txte^{tA}\|_{\mathcal{B}(X_n)}\leq &M_A \txte^{\omega_A t},\quad \|\txte^{tA}\|_{\mathcal{B}(X_{n-1+\gamma_X},X_n)}\leq C_At^{\gamma_X-1} \txte^{\omega_A t},\\
 	& \|\txte^{tA}\|_{\mathcal{B}(X_{n-1+\delta_X},X_n)}\leq C_At^{\delta_X-1} \txte^{\omega_A t}
 \end{align*}
 and
 \begin{align*}
  	\|\txte^{tB}\|_{\mathcal{B}(Y_n)}\leq M_B \txte^{\omega_B t},\quad \|\txte^{tB}\|_{\mathcal{B}(Y_{n-1+\delta_Y},Y_n)}\leq C_Bt^{\delta_Y-1} \txte^{\omega_B t}.
 \end{align*}
   \item \textbf{Relation of constants}: we define $\omega_f:=\omega_A+(2C_AL_{f})^{\frac{1}{\gamma_X}} (\frac{1}{\gamma_X})^{\frac{1-\gamma_X}{\gamma_X}}$ if $\gamma_X\in(0,1)$ and take $\omega_f>\omega_A+C_AL_f$ if $\gamma_X=1$. Moreover, we assume
 \begin{align*}
 	&\omega_f<0,\\L_f\max\{\|A^{-1}\|_{\mathcal{B}(X_{\gamma_X},X_{1})}&,\|A^{-1}\|_{\mathcal{B}(X_{\delta_X-1},X_{\delta_X})}\}<1,
 \end{align*}
\end{enumerate}
\begin{remark}
	The conditions of Assumption $(A_n)$ are almost identical to the ones in \cite[Section 4]{Hummel_Kuehn_2020}. Here they are slightly simplified in the sense that the differentiability of the nonlinearities, which is assumed here, is not necessary for all results in \cite{Hummel_Kuehn_2020}.
\end{remark}

\subsubsection{Assumption $(B_n)$} \label{Assump:B_n}
This assumption is sufficient for obtaining a two-parameter family of slow manifolds $S_{\epsilon,\zeta}$ \cite{Hummel_Kuehn_2020}, in particular specifying the role of the second parameter $\zeta$:
we assume that for each small $\zeta>0$ there is a splitting $Y=Y_F^{\zeta}\oplus Y_S^{\zeta}$, independently from $n$, into a \emph{fast} part $Y_F^{\zeta}$ and a \emph{slow} part $Y_S^{\zeta}$ such that the projections $\operatorname{pr}_{Y_{F}^{\zeta}}$ and $\operatorname{pr}_{Y_{S}^{\zeta}}$ commute with $B$ on $Y_n$. 

The crucial characterization of the fast part is that $Y_F^{\zeta}\cap Y_{n-1+\delta_Y}$ contains the parts of $Y_{n-1+\delta_Y}$ that decay under the semigroup $(\txte^{tB})_{t\geq0}$ almost as fast as the space $X_n$ under $(\txte^{\zeta^{-1}tA})_{t\geq0}$; analogously, the slow space $Y_S^{\zeta}\cap Y_n$ contains the parts of $Y_n$ which do not decay or which only decay slowly under the semigroup $(\txte^{tB})_{t\geq0}$ compared to $X_n$ under $(\txte^{\zeta^{-1}tA})_{t\geq0}$. This idea is expressed in point~\eqref{Assump:B_n:5} of the following assumptions:
\begin{enumerate}[(i)]
	\item \label{Assump:B_n:1} \textbf{Closed subspaces}: the spaces $Y_{F}^{\zeta}\cap Y_{\beta}$ and $Y_{S}^{\zeta}\cap Y_{\beta}$ are closed in $Y_{\beta}$ for all $\beta\geq0$ and will be endowed with the norms $\|\cdot\|_{Y_\beta}$.
	   \item \textbf{Lipschitz bound}: using the notation $f(x,y_F,y_S):=f(x,y_F+y_S)$ and $g(x,y_F,y_S):=g(x,y_F+y_S)$, the nonlinearity $g$ satisfies
        \begin{align*}
        & \|\operatorname{pr}_{Y_S^{\zeta}}g(x-\tilde{x},y_F-\tilde{y_F},y_S-\tilde{y_S})\|_{Y_n} \\
        & \leq L_g\zeta^{\delta_Y-1}\big(\|x-\tilde{x}\|_{X_n}+\|y_F-\tilde{y_F}\|_{Y_n}+\|y_S-\tilde{y_S}\|_{Y_n}\big).
        \end{align*}
       \item \textbf{Semigroup in slow subspace}: the realization of $B$ in $Y_S^{\zeta}\cap Y_{n-1}$, i.e.
        \[
              B_{Y_S^{\zeta}\cap Y_{n-1}}\colon Y_S^{\zeta}\cap Y_{n-1}\supset D(B_{Y_S^{\epsilon}\cap Y_{n-1}})\to Y_S^{\zeta}\cap Y_{n-1},\; v\mapsto Bv
         \]
         with 
         \[
         	D(B_{Y_S^{\zeta}\cap Y_{n-1}}):=\{v_0\in Y_S^{\zeta} \cap Y_{n}:Bv_0\in Y_S^{\zeta}\cap Y_{n-1} \}
         \]
         generates a $C_0$-group $(\txte^{tB_{Y_S^{\zeta}\cap Y_{n-1}}})_{t\in\R}\subset\mathcal{B}((Y_S^{\zeta}\cap Y_{n-1},\|\cdot\|_{Y_{n-1}}))$ which satisfies $\txte^{tB_{Y_S^{\zeta}\cap Y_{n-1}}}=\txte^{tB}$ on $Y_S^{\zeta}\cap Y_{n-1}$ for $t\geq0$. For the sake of readability, we will still write $B$ instead of $B_{Y_S^{\zeta}\cap Y_{n-1}}$.
         \item \textbf{Semigroup in fast subspace}: the realization of $B$ in $Y_F^{\zeta}\cap Y_{n-1}$, i.e.
        \[
          B_{Y_F^{\zeta}\cap Y_{n-1}}\colon Y_F^{\zeta}\cap Y_{n-1}\supset D(B_{Y_F^{\zeta}\cap Y_n})\to Y_F^{\zeta}\cap Y_{n-1},\; v\mapsto Bv
        \]
        with 
         \[
        	D(B_{Y_F^{\zeta}}):=\{v_0\in Y_F^{\zeta}\cap Y_n:Bv_0\in Y_F^{\zeta}\cap Y_{n-1} \}
        \]
        has $0$ in its resolvent set. For the sake of readability, we will still write $B$ instead of $B_{Y_F^{\zeta}\cap Y_{n-1}}$.
        \item \label{Assump:B_n:5} \textbf{Speed of decay in $Y_F^{\zeta}$ and $Y_S^{\zeta}$}:
        there are constants $C_B,M_B>0$ such that for all $\zeta>0$ small enough there are constants $0\leq N_F^{\zeta}< N_S^{\zeta}<-\zeta^{-1}\omega_A$ such that for all $t>0$, $y_F\in Y_F^{\zeta}\cap Y_{n-1+\delta_Y}$ and $y_S\in Y_S^{\zeta}\cap Y_n$ we have the estimates
        \begin{align*}
         \|\txte^{tB}y_F\|_{Y_n}&\leq  C_Bt^{\delta_Y-1}\txte^{(N_F^{\zeta}+\zeta^{-1}\omega_A)t} \|y_F\|_{Y_{n-1+\delta_Y}},\\
         \|\txte^{-tB}y_S\|_{Y_n}&\leq M_B \txte^{-(N_S^{\zeta}+\zeta^{-1}\omega_A) t} \|y_S\|_{Y_n}.
        \end{align*}
           \item \textbf{Estimate for contraction property in Lyapunov-Perron argument}: the parameters and constants introduced above satisfy
            \begin{align} \label{Eq:Estimate_for_FixedPoint}
            	\frac{2^{\gamma_X}L_fC_A\Gamma(\gamma_X)}{\big(2(\epsilon\zeta^{-1}-1)\omega_A+\epsilon(N_S^{\zeta}+N_F^{\zeta})\big)^{\gamma_X}}+\frac{2^{\delta_Y}L_gC_B\Gamma(\delta_Y)}{(N_S^{\zeta}-N_F^{\zeta})^{\delta_Y}}+\frac{2\zeta^{\delta_Y-1}L_gM_B}{N_S^{\zeta}-N_F^{\zeta}}<1,
             \end{align}
             where $\Gamma$ denotes the gamma function.
\end{enumerate}
\begin{remark}
	Assumption $(B_n)$ is identical to the conditions in \cite[Section 5]{Hummel_Kuehn_2020} except for the fact that in \cite[Section 5]{Hummel_Kuehn_2020} it is only assumed for $n=1$. Here, we make use of additional regularity in certain estimates and therefore formulate the assumption for $n\in\N$.
\end{remark}

\subsubsection{Assumption $(C_n)$}
	If we want to use a Galerkin approximation in both the slow and the fast variable, then it is useful to also impose similar conditions on $X$, i.e. that there is a splitting $X=X^{\zeta}_F\oplus X^{\zeta}_S$ such that the conditions \eqref{Assump:B_n:1}-\eqref{Assump:B_n:5} in Assumption $(B_n)$ hold with $Y$ and $B$ being replaced by $X$ and $A$, respectively.
	
\subsubsection{Assumption $(D)$}
	This assumption will enable us to trade regularity for additional decay behavior. We assume that for $0\leq \alpha \leq \beta$ there is a constant $C_{\alpha,\beta}$ such that, for all $x\in X_F^{\zeta}\cap X_\beta,\,y\in Y_F^{\zeta}\cap Y_\beta$, we have the estimates
\begin{align}\label{Eq:Smoothness_for_decay}
	\| y \|_{Y_\alpha} \leq C_{\alpha,\beta} \zeta^{\beta-\alpha}\| y \|_{Y_\beta},\quad \| x \|_{X_\alpha} \leq C_{\alpha,\beta} \zeta^{\beta-\alpha}\| x \|_{X_\beta}.
\end{align}

\begin{remark}
		Let us give an example of a situation in which Assumption $(D)$ is satisfied. We define the Bessel potential space on the torus $\mathbb{T}$ by
	\[
		H^s(\mathbb{T}):=\left\{u\in\mathscr{D}'(\mathbb{T}): \sum_{l\in\Z} (1+|l|^2)^{s/2}\langle u,e_l \rangle_{L_2(\mathbb{T})} e_l \, \in L_2(\mathbb{T}) \right\},
	\]
	where $e_l=[x\mapsto e^{2\pi ilx}]$, and endow the space with the norm
	\[
		\| u \|_{H^s(\mathbb{T})}:= \| ((1+|l|^2)^{s/2}\langle u,e_l \rangle_{L_2(\mathbb{T})})_{l\in\Z}\|_{\ell_2(\Z)}.
	\]
	Consider for example $X=Y=L_2(\mathbb{T})$ and $A=B=\Delta-1$ with domain $D(A)=D(B)=Y_1=X_1=H^2(\mathbb{T})$. The interpolation-extrapolation scales are then given by $Y_{\alpha}=X_{\alpha}=H^{2\alpha}(\mathbb{T})$. $Y_F^{\zeta}$ and $X_F^{\zeta}$ will be the subspaces of $L_2(\mathbb{T})$ such that the $l$-th Fourier coefficients with $(|l|-1)^2\leq \zeta^{-1}$ are equal to $0$. With this choice we obtain for $y \in Y_F^{\zeta} \cap Y_{\beta}$
	\begin{align*}
		&\quad\| y \|_{Y_\alpha}=\| y \|_{H^{2\alpha}(\mathbb{T})}=\left(\sum_{l\in\Z,\;|l|^2\geq\zeta^{-1}}(1+|l|^2)^{2\alpha}\langle y,e_l\rangle_{L_2(\mathbb{T})}^2\right)^{1/2}\\
		&\leq (1+\zeta^{-1})^{\alpha-\beta} \left(\sum_{l\in\Z,\;|l|^2\geq\zeta^{-1}}(1+|l|^2)^{2\beta}\langle y,e_l\rangle_{L_2(\mathbb{T})}^2\right)^{1/2}\lesssim \zeta^{\beta-\alpha}\| y \|_{Y_\beta},
	\end{align*}
	and the same for $x \in X_F^{\zeta} \cap X_{\beta}$.
	This is estimate \eqref{Eq:Smoothness_for_decay} with $C_{\alpha,\beta}=1$.
\end{remark}

\subsection{The direct approach} Let us now briefly collect the main results of \cite[Section 5]{Hummel_Kuehn_2020}. Under the assumptions $(A_n)$ and $(B_n)$, one can rewrite \eqref{Eq:FastSlow} together with \eqref{Eq:FastSlow:InitialValues} as 
\begin{align}
\begin{aligned}\label{Eq:Fast-Slow_Splitting}
 \epsilon\partial_t u^{\epsilon}(t) &= Au^{\epsilon}(t)+ f(u^\epsilon(t),v^{\epsilon}_F(t),v^{\epsilon}_S(t)),\\
 \partial_t v^{\epsilon}_F(t) &= Bv_F^{\epsilon}(t)+\operatorname{pr}_{Y_F^{\zeta}}g(u^\epsilon(t),v^{\epsilon}_F(t),v^{\epsilon}_S(t)),\\
  \partial_t v^{\epsilon}_S(t) &= Bv_S^{\epsilon}(t)+\operatorname{pr}_{Y_S^{\zeta}}g(u^\epsilon(t),v^{\epsilon}_F(t),v^{\epsilon}_S(t)),\\
 u^{\epsilon}(0)&=u_0,\quad v^{\epsilon}_F(0)=\operatorname{pr}_{Y_F^{\zeta}}v_0,\quad v^{\epsilon}_S(0)=\operatorname{pr}_{Y_S^{\zeta}}v_0.
 \end{aligned}
\end{align}
For this equation, we can formulate the following theorem, which is a collection of the results in \cite[Section 5]{Hummel_Kuehn_2020}.
\begin{theorem}
    Let $n\in\N$ and suppose that Assumption $(A_n)$ and Assumption $(B_n)$ hold true. Fix $c\in(0,1)$ and let $0<\epsilon<c\frac{\omega_f}{\omega_A}\zeta$. Then there is a family of sets $S_{\epsilon,\zeta}$ given as graphs
    \[
        S_{\epsilon,\zeta}:=\{(h_X^{\epsilon,\zeta}(v_{0,S}),h_{Y_F^{\zeta}}^{\epsilon,\zeta}(v_{0,S}),v_{0,S})\,\vert\, v_{0,S}\in Y_S^{\zeta}\cap Y_n\}
    \]
    where $(h_X^{\epsilon,\zeta},h_{Y_F^{\zeta}}^{\epsilon,\zeta})\colon Y_S^{\zeta}\cap Y_n\to X_n\times (Y_F^{\zeta}\cap Y_n)$ is differentiable such that the following assertions hold:
    \begin{enumerate}[(a)]
        \item \textbf{Invariance}: the set $S_{\epsilon,\zeta}$ is invariant under the semiflow generated by \eqref{Eq:Fast-Slow_Splitting}.
        \item \textbf{Distance between $S_{\epsilon,\zeta}$ and critical manifold}: there is a constant $C>0$, which is independent of $\epsilon$ and $\zeta$, such that for all $v_{0,S}\in Y_S^{\zeta}\cap Y_n$ we have
        	\[
        		\left\| \begin{pmatrix} h^{\epsilon,\zeta}_{X_n}(v_{0,S})-h^0(v_{0,S}) \\
	        	 h^{\epsilon,\zeta}_{Y^{\zeta}_F}(v_{0,S})\end{pmatrix} \right\|_{X_n\times Y_n}\leq C\left(\epsilon       +\frac{1}{(N_S^{\zeta}-N_F^{\zeta})^{\delta_y}}\right)\|v_{0,S}\|_{Y_n}.
        	\]
        \item \textbf{Exponentially fast convergence to $S_{\epsilon,\zeta}$}: there are $\epsilon_0,\zeta_0>0$ and constants $C,c>0$ independent of $T$ such that for all $\epsilon\in(0,\epsilon_0]$, $\zeta\in(0,\zeta_0]$ with $0<\epsilon<c\frac{\omega_f}{\omega_A}\zeta$, all $t\in[0,T]$ and all $v_{0,S}\in Y_S^{\zeta}\cap Y_n$ we have
        \begin{align*}
		\left\|\begin{pmatrix} u^{\epsilon}(t)-h^{\epsilon,\zeta}_{X_n}(v^{\epsilon}_S(t))\\
						v^{\epsilon}_F(t)-h^{\epsilon,\zeta}_{Y_F^{\zeta}}(v^{\epsilon}_S(t))\end{pmatrix}\right\|_{X_n\times Y_n}\leq C\txte^{-ct}\left\|\begin{pmatrix} u_0-h^{\epsilon,\zeta}_{X_n}(v_{0,S})\\
						v_{0,F}-h^{\epsilon,\zeta}_{Y_F^{\zeta}}(v_{0,S})\end{pmatrix}\right\|_{X_n\times Y_n}.
	\end{align*}
	\item \textbf{Approximation by slow subsystem}: the reduced slow subsystem given by
    	\begin{align}
        	\begin{aligned}\label{Eq:Reduced_Slow_Subsystem}
        		0&=Au^{0}_{\zeta}(t)+f(u^{0}_{\zeta}(t),v^0_{\zeta}(t)),\\
        		0&=\operatorname{pr}_{Y_F^{\zeta}}v^0_{\zeta}(t),\\
        		\partial_tv^0_{\zeta}(t)&=Bv^0_{\zeta}(t)+\operatorname{pr}_{Y_S^{\zeta}}g(u^{0}_{\zeta}(t),v^0_{\zeta}(t)),\\
        		v^0_{\zeta}(0)&=\operatorname{pr}_{Y_S^{\zeta}}v_0.
        	\end{aligned}
        \end{align}
        has a unique solution $(u^{0}_{\zeta},v^{0}_{\zeta})\in C^1([0,T];X_{n-1}\times Y_{n-1})\cap C([0,T];X_n\times Y_n)$ which approximates the solution of the full fast-slow system. 
        
        More precisely, there are a constant $C>0$ which may depend on $T$, some suitably chosen $\omega_g \in \R$ and $\epsilon_0,\zeta_0>0$ such that for all $\epsilon\in(0,\epsilon_0]$, $\zeta\in(0,\zeta_0]$ with $0<\epsilon<c\frac{\omega_f}{\omega_A}\zeta$, all $t\in[0,T]$ and all $v_{0}\in  Y_n$ we have
        \begin{align*}
		\left\|\begin{pmatrix}u^{\epsilon}(t)-h^0(v^{0}_{\zeta}(t))\\ v^{\epsilon}(t)-v^{0}_{\zeta}(t) \end{pmatrix}\right\|_{Y_n}
		&\leq C\bigg(\|\operatorname{pr}_{Y_F^{\zeta}}v_0\|_{Y_n}+\big(\epsilon+\tfrac{1}{(\omega_g-\zeta^{-1}\omega_A-N_F^{\zeta})^{\delta_Y}}\big)\|v_0\|_{Y_n}\\
		&\qquad\qquad+(\epsilon^{\delta_Y}+\txte^{\epsilon^{-1}\omega_f t})\|u_0-h^0(v_0)\|_{X_n}\bigg).
	\end{align*}
	If even $(u_0,\operatorname{pr}_{Y_F^{\zeta}}v_0,\operatorname{pr}_{Y_S^{\zeta}}v_0)\in S_{\epsilon,\zeta}$, then it holds that
		\begin{align*}
		\left\|\begin{pmatrix}u^{\epsilon}(t)-h^0(v^{0}_{\zeta}(t))\\ v^{\epsilon}(t)-v^{0}_{\zeta}(t) \end{pmatrix}\right\|_{X_n\times Y_n}\leq C\left(\epsilon+\tfrac{1}{(\omega_g-\zeta^{-1}\omega_A-N_F^{\zeta})^{\delta_Y}}+\tfrac{1}{(N_S^{\zeta}-N_F^{\zeta})^{\delta_Y}}\right)\|v_0\|_{Y_n}.
	\end{align*}
    \end{enumerate}
\end{theorem}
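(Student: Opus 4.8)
This theorem collects results of \cite[Section~5]{Hummel_Kuehn_2020}, so the plan is to reconstruct them by a Lyapunov--Perron fixed point argument for the mild form of \eqref{Eq:Fast-Slow_Splitting}, indicating where the hypotheses enter. First I would pass to the integrated formulation, integrating the fast $u^\epsilon$-equation and the $Y_F^\zeta$-equation from $-\infty$ (where $(\txte^{\epsilon^{-1}tA})_{t\ge0}$ and the part of $(\txte^{tB})_{t\ge0}$ on $Y_F^\zeta$ act as strict contractions, so the Duhamel integrals over $(-\infty,t]$ converge) and the $Y_S^\zeta$-equation from $0$ (through which the prescribed datum enters via $\txte^{tB}v_{0,S}$). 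This produces, for each $v_{0,S}\in Y_S^\zeta\cap Y_n$, the operator
\begin{equation*}
\mathcal{T}_{v_{0,S}}\!\begin{pmatrix}u\\ v_F\\ v_S\end{pmatrix}\!(t)=\begin{pmatrix}\epsilon^{-1}\int_{-\infty}^{t}\txte^{\epsilon^{-1}(t-s)A}f(u,v_F,v_S)(s)\,\txtd s\\ \int_{-\infty}^{t}\txte^{(t-s)B}\operatorname{pr}_{Y_F^\zeta}g(u,v_F,v_S)(s)\,\txtd s\\ \txte^{tB}v_{0,S}+\int_{0}^{t}\txte^{(t-s)B}\operatorname{pr}_{Y_S^\zeta}g(u,v_F,v_S)(s)\,\txtd s\end{pmatrix}
\end{equation*}
on the Banach space of continuous curves $(-\infty,0]\to X_n\times(Y_F^\zeta\cap Y_n)\times(Y_S^\zeta\cap Y_n)$ with finite weighted supremum norm $\sup_{t\le0}\txte^{-\eta t}\|\cdot(t)\|$, for the concrete weight $\eta:=\tfrac12(N_F^\zeta+N_S^\zeta)+\zeta^{-1}\omega_A$, which lies strictly between the fast rates $\epsilon^{-1}\omega_A,\,N_F^\zeta+\zeta^{-1}\omega_A$ and the slow rate $N_S^\zeta+\zeta^{-1}\omega_A$ precisely because $0<\epsilon<c\,\omega_f\omega_A^{-1}\zeta$. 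Using the semigroup bounds of $(A_n)$ and of point~\eqref{Assump:B_n:5}, together with $\int_0^\infty s^{\nu-1}\txte^{-\mu s}\,\txtd s=\Gamma(\nu)\mu^{-\nu}$, I would verify that $\mathcal{T}_{v_{0,S}}$ maps this space into itself; the Lipschitz bounds of $(A_n)$ and of point (ii) of $(B_n)$ then make it a uniform contraction, and a direct computation shows that with this $\eta$ the contraction constant is exactly the left-hand side of \eqref{Eq:Estimate_for_FixedPoint}, each of whose three summands dominates one of the three Duhamel terms. The unique fixed point $(u^\epsilon,v_F^\epsilon,v_S^\epsilon)$ is a backward-bounded solution; setting $h_X^{\epsilon,\zeta}(v_{0,S}):=u^\epsilon(0)$, $h_{Y_F^\zeta}^{\epsilon,\zeta}(v_{0,S}):=v_F^\epsilon(0)$ gives the graph, Lipschitz dependence on $v_{0,S}$ follows from uniformity of the contraction, and differentiability from the $C^1$ regularity of $f,g$ via the fiber contraction theorem applied to the formally differentiated fixed point equation. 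Assertion (a) is then immediate, since shifting time maps a backward-bounded solution through a point of $S_{\epsilon,\zeta}$ to one through its image.

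For (b) I would use $\epsilon^{-1}\int_{-\infty}^{0}\txte^{-\epsilon^{-1}sA}\,\txtd s=-A^{-1}$, so that $h^0(v_{0,S})=\epsilon^{-1}\int_{-\infty}^{0}\txte^{-\epsilon^{-1}sA}f(h^0(v_{0,S}),v_{0,S})\,\txtd s$, subtract this from the representation of $u^\epsilon(0)$, and split the difference of the nonlinearities into (i) a part Lipschitz in the fast variables, controlled by $\|v_F^\epsilon\|$, which the $Y_F^\zeta$-fixed-point equation bounds by $\mathcal{O}\!\big((N_S^\zeta-N_F^\zeta)^{-\delta_Y}\big)\|v_{0,S}\|_{Y_n}$, and (ii) a part measuring $\|v_S^\epsilon(s)-v_{0,S}\|_{Y_n}$, whose contribution is $\mathcal{O}(\epsilon)\|v_{0,S}\|_{Y_n}$ because the kernel $\epsilon^{-1}\txte^{-\epsilon^{-1}sA}$ concentrates at width $\mathcal{O}(\epsilon)$ near $s=0$ while $v_S^\epsilon$ moves only on the slow time scale; closing the resulting self-referential estimate (again with the smallness from \eqref{Eq:Estimate_for_FixedPoint}) yields the stated bound, and applying the $Y_F^\zeta$-bound at $t=0$ controls the second component.

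For (c) I would study the difference $w(t)$ between an arbitrary forward solution and the point of $S_{\epsilon,\zeta}$ with the same $Y_S^\zeta$-coordinate $v_S^\epsilon(t)$; invariance of $S_{\epsilon,\zeta}$ removes the transported term, leaving for $w$ a system with strongly stable linear part $(\epsilon^{-1}A,\,B|_{Y_F^\zeta})$ and Lipschitz coupling, so a variation-of-constants estimate gives $\|w(t)\|\le C\txte^{-ct}\|w(0)\|$. For (d), well-posedness of \eqref{Eq:Reduced_Slow_Subsystem} is a contraction in $C([0,T];Y_S^\zeta\cap Y_n)$ using that $B$ generates a group on $Y_S^\zeta\cap Y_{n-1}$ and that $h^0$ is Lipschitz (from $L_f\|A^{-1}\|<1$ in $(A_n)$ and the implicit function theorem); the error estimate follows by inserting $h^0(v_S^\epsilon(t))$ as an intermediate term, using (c) to pass from the full solution to the manifold — a step that is vacuous, hence the sharper second bound, once the datum lies on $S_{\epsilon,\zeta}$ — (b) to pass from the graph to the critical manifold, a Gronwall estimate for $\|v_S^\epsilon(t)-v^0_\zeta(t)\|_{Y_n}$ on $Y_S^\zeta$, and the decay of $v_F^\epsilon$ at rate $N_F^\zeta+\zeta^{-1}\omega_A$ measured against the weight $\txte^{\omega_g t}$ to produce the term $(\omega_g-\zeta^{-1}\omega_A-N_F^\zeta)^{-\delta_Y}$.

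The main difficulty is not any individual step but the uniformity and sharpness of all constants: everything must be controlled uniformly in $\epsilon$ and $\zeta$ with the precise dependence tracked, which forces one to work throughout in the interpolation--extrapolation scale so that the parabolic smoothing factors $s^{\gamma_X-1}$, $s^{\delta_Y-1}$ absorb the regularity lost by $f$ and $g$, and to balance the three small quantities in \eqref{Eq:Estimate_for_FixedPoint} against the singular scaling $\epsilon\zeta^{-1}$ linking the fast rate $\epsilon^{-1}\omega_A$ to the thresholds $N_F^\zeta,N_S^\zeta$. Since all of this has been carried out in \cite[Section~5]{Hummel_Kuehn_2020} for $n=1$ under merely Lipschitz nonlinearities, the only additional point is that the $C^1$ hypothesis and the formulation for general $n\in\N$ enter the above estimates monotonically and do not change the fixed point structure.
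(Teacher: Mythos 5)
Your proposal follows exactly the Lyapunov--Perron construction on which the paper relies: the theorem is stated as a collection of results from \cite[Section~5]{Hummel_Kuehn_2020}, and your fixed-point operator, the weight $\eta=\zeta^{-1}\omega_A+\tfrac12(N_F^{\zeta}+N_S^{\zeta})$, and the identification of the contraction constant with the left-hand side of \eqref{Eq:Estimate_for_FixedPoint} coincide with the setup the paper itself recalls in Section~\ref{Sec:MainResult}. The sketch is correct and takes essentially the same route as the cited proof.
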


\subsection{The Galerkin approach}\label{Sec:Galerkin}
Assuming conditions $(A_n)$, $(B_n)$ and $(C_n)$, we may additionally consider the projection of \eqref{Eq:FastSlow} and \eqref{Eq:FastSlow:InitialValues} to the slow part in both variables, i.e.
\begin{align}
	\begin{aligned}\label{Eq:Fast-Slow_Galerkin}
		\epsilon\partial_t u^{\epsilon}_G&= Au^{\epsilon}_G +\operatorname{pr}_{X_S^\zeta}f(u^{\epsilon}_G,v_G^{\epsilon}),\\
		\partial_t v^{\epsilon}_G &= Bv^{\epsilon}_G + \operatorname{pr}_{Y_S^{\zeta}}g(u^{\epsilon}_G,v_G^{\epsilon}),\\
		u_G^{\epsilon}(0)&=\operatorname{pr}_{X_S^{\zeta}}u_0,\quad v_G^{\epsilon}(0)=\operatorname{pr}_{Y_S^{\zeta}}u_0.
	\end{aligned}
\end{align}
Note that, in general, this is not necessarily a finite-dimensional evolution equation. If $A$ and $B$ generate $C_0$-groups, one may even have $X_S^{\zeta}=X$ and $Y_S^{\zeta}=Y$. However, if $A$ and $B$ have eigenvalue expansions with only a finite number of eigenvalues in each vertical stripe of bounded width within the complex plane, the spaces $X_S^{\zeta}$ and $Y_S^{\zeta}$ are finite-dimensional. This is the situation of many applications such as the Laplacian $\Delta$ on the torus $\mathbb{T}$. In such a case \eqref{Eq:Fast-Slow_Galerkin} indeed coincides with a Galerkin approximation of \eqref{Eq:FastSlow} and \eqref{Eq:FastSlow:InitialValues}. The spaces $X_S^{\zeta}$ and $Y_S^{\zeta}$ are given as the linear span of the eigenfunctions associated with the $N_A$ and $N_B$ eigenvalues, including multiplicities, of $A$ and $B$, respectively, in $\{z\in\C: \zeta^{-1}\omega_A+N_S^{\zeta}\leq\Re(z)\}$.

For example, for $A=B=\Delta$ on $X=Y=L_2(\mathbb{T})$ with eigenvalues $\lambda_k = -4\pi^2 k^2$, $k \in \mathbb{Z}$, and eigenfunctions $(e_k)_{k \in \mathbb{Z}}$, the expansions
\begin{equation*}
u^{\varepsilon}(x,t)=\sum_{k\in \mathbb{Z}} u_k^{\varepsilon}(t) e_k(x),\qquad v^{\varepsilon}(x,t)=\sum_{k\in \mathbb{Z}} v_k^{\varepsilon}(t) e_k(x)
\end{equation*}
give, upon taking the inner product of~\eqref{Eq:FastSlow} with each $e_k$, the system of Galerkin ODEs
\begin{align}
\begin{aligned}\label{Eq:Fast-Slow_Galerkin_ODES}
\varepsilon \partial_t u_k^{\varepsilon}&=\lambda_k u_k^{\varepsilon} + \langle f(u^{\varepsilon}, v^{\varepsilon}),e_k\rangle \\
\partial_t v_k^{\varepsilon} &=  \lambda_k v_k^{\varepsilon} +\langle g(u^{\varepsilon}, v^{\varepsilon}),e_k\rangle. 
\end{aligned}
\end{align}
A truncation at $\left|k\right| \leq k_0 = N_{\Delta}$, for $\epsilon$ and $\zeta$ given as before, yields the described correspondence to system~\eqref{Eq:Fast-Slow_Galerkin}. Such a Galerkin approach is very insightful in situations of dynamical interest such as dynamic bifurcations in reaction-diffusion systems where geometric techniques can be applied to the finite-dimensional approximation and then be extended to the infinite-dimensional limit (see e.g.~\cite{Engel_Kuehn_2020}).

Generally, the existence of such an infinite-dimensional limit raises questions:
if we fix $\zeta$ and let $\epsilon>0$ be small enough, then \eqref{Eq:Fast-Slow_Galerkin} has a family of slow manifolds $G_{\epsilon,\zeta}$ given by
\[
G_{\epsilon,\zeta}= \{(h_G^{\epsilon,\zeta}(v),v):v\in Y_S^{\zeta} \cap Y_n \}
\]
for certain mappings $h_G^{\epsilon,\zeta}\colon Y_S^{\zeta} \cap Y_n \to X_S^{\zeta} \cap X_n$. If $Y_S^{\zeta} \cap Y_n$ and $X_S^{\zeta} \cap X_n$ are finite-dimensional, then this can be derived by classical finite-dimensional Fenichel theory. If they are infinite-dimensional, then one can still use the results from \cite{Hummel_Kuehn_2020}. 
It is now of particular interest to study the behavior of $G_{\epsilon,\zeta}$ -- or a similar object related to it -- as $\zeta\to0$, which corresponds with $N_A, N_B \to \infty$ in the situation described above.
Under a suitable notion of convergence, the potential limiting object $G_{\epsilon,0}$ may be considered as a type of slow manifold. The main difficulty for such an approach is that the existence of $G_{\epsilon,\zeta}$ for fixed $\epsilon$ becomes unclear when $\zeta$ gets too small. In \cite{Engel_Kuehn_2020}, such a procedure was carried out for a particular example, by using an explicit approximation of the parametrization $h_G^{\epsilon,\zeta}$ whose limit for $\zeta \to 0$ could be obtained directly.
Generally, one has to be careful about changing the order of quantifiers for $\epsilon$ and $\zeta$ and the dynamical interpretation of the different objects, as we will demonstrate in Example~\ref{Ex:explicit_calculations}.
Hence, it becomes particularly important to understand the relation between $G_{\epsilon, \zeta}$ and $S_{\epsilon, \zeta}$ in order to measure the quality of the Galerkin approximation for infinite-dimensional fast-slow systems.

\section{The main result}\label{Sec:MainResult}
In this section, we fix $m,n\in\N$, $m\leq n$. For our main result we suppose that --- as it was derived in \cite{Hummel_Kuehn_2020} --- the slow manifolds have been constructed via a Lyapunov-Perron approach. Therefore, we have that
\begin{align*}
	\begin{pmatrix} h_X^{\epsilon,\zeta}(v_{0,S}) \\ h_{Y_F^{\zeta}}^{\epsilon,\zeta}(v_{0,S}) \end{pmatrix} = \begin{pmatrix} \int_{-\infty}^0 e^{-\epsilon^{-1}sA} f(\bar{u}(s),\bar{v}_F(s),\bar{v}_S(s))\,\txtd s \\  \int_{-\infty}^0 e^{-sB} \operatorname{pr}_{Y_F^{\zeta}} g(\bar{u}(s),\bar{v}_F(s),\bar{v}_S(s))\,\txtd s\end{pmatrix},
\end{align*}
where $v_{0,S}\in Y_S^{\zeta}\cap Y_n$ and where $(\bar{u},\bar{v}_F,\bar{v}_S)$ denotes the unique fixed point of the operator
\begin{align}
\begin{aligned}\label{Eq:LyapunovPerron:1}
	&\mathscr{L}_{v_{0,S},\epsilon,\zeta}\colon \mathcal C_{\eta,n}\to \mathcal C_{\eta,n},\\
	&\quad\begin{pmatrix}u\\v_F\\v_S \end{pmatrix}\mapsto \left[t\mapsto\begin{pmatrix} 
		\epsilon^{-1} \int_{-\infty}^t \txte^{\epsilon^{-1}(t-s)A} f(u(s),v_F(s),v_S(s))\,\txtd s \\
		\int_{-\infty}^t \txte^{(t-s)B} \operatorname{pr}_{Y_F^{\zeta}}g(u(s),v_F(s),v_S(s))\,\txtd s\\
		\txte^{tB}v_{0,S}+\int_{0}^t \txte^{(t-s)B} \operatorname{pr}_{Y_S^{\zeta}}g(u(s),v_F(s),v_S(s))\,\txtd s
	\end{pmatrix}\right].
	\end{aligned}
\end{align}
Here, the space $\mathcal C_{\eta, n}$ with
$$\eta:=\zeta^{-1}\omega_A+\frac{N_S^{\zeta}+N_F^{\zeta}}{2}$$
consists of all $(u,v_F,v_S)\in C((-\infty,0];X\times (Y_F^{\zeta}\cap Y_n)\times (Y_S^{\zeta}\cap Y_n))$ such that
	\[
	\|(u,v_F,v_S)\|_{\mathcal C_{\eta,n}}:=\sup_{t\leq0 } \txte^{-\eta t} \big(\|u(t)\|_{X_n}+\|v_F(t)\|_{Y_n}+\|v_S(t)\|_{Y_n}\big)<\infty.
\]
The fixed point $(\bar{u},\bar{v}_F,\bar{v}_S)$ has been shown to exist in \cite[Proposition 5.1]{Hummel_Kuehn_2020}. Likewise, we may write
\begin{align}
	h_G^{\eta,\zeta}(v_{0,S})=\epsilon^{-1}\int_{-\infty}^0 e^{-\epsilon^{-1}sA}\operatorname{pr}_{X_S^{\zeta}}f(\bar{u}_G(s),\bar{v}_G(s))\,\txtd s,
\end{align}
where $(\bar{u}_G,\bar{v}_G)$ denotes the unique fixed point of the operator
\begin{align}
\begin{aligned}\label{Eq:LyapunovPerron:2}
	&\mathscr{L}^G_{v_{0,S},\epsilon,\zeta}\colon \mathcal C_{\eta,n}^G\to \mathcal C_{\eta,n}^G,\\
	&\quad\begin{pmatrix}u_G\\v_G \end{pmatrix}\mapsto \left[t\mapsto\begin{pmatrix} 
		\epsilon^{-1} \int_{-\infty}^t \txte^{\epsilon^{-1}(t-s)A} \operatorname{pr}_{X_S^{\zeta}}f(u_G(s),v_G(s))\,\txtd s \\
		\txte^{tB}v_{0,S}+\int_{0}^t \txte^{(t-s)B} \operatorname{pr}_{Y_S^{\zeta}}g(u_G(s),v_G(s))\,\txtd s
	\end{pmatrix}\right].
	\end{aligned}
\end{align}
Analogously to before, $\mathcal C_{\eta, n}^G$ denotes the space of all $(u_G,v_G)\in C((-\infty,0];(X_S^{\zeta}\cap X_n) \times (Y_S^{\zeta}\cap Y_n))$ such that
	\[
	\|(u_G,v_G)\|_{\mathcal C_{\eta,n}^G}:=\sup_{t\leq0 } \txte^{-\eta t} \big(\|u_G(t)\|_{X_n}+\|v_G(t)\|_{Y_n}\big)<\infty.
\]

With this terminology at hand, we can now formulate our main theorem:
\begin{theorem}\label{Thm:Main}
	We fix $m,n\in\N$, $m\leq n$ and $c\in(0,1)$. Suppose that the assumptions $(A_m)$, $(B_m)$ and $(C_m)$ as well as $(A_n)$, $(B_n)$, $(C_n)$ and $(D)$ are satisfied. Then there is a constant $C>0$ such that for all $\epsilon,\zeta>0$ small enough with $c\frac{\omega_f}{\omega_A}\zeta>\epsilon$ and all $v_{0,S}\in Y_S^{\zeta}\cap Y_n$, we have
		\begin{align}\label{Eq:Comparison_slow_manifolds}
		\begin{aligned}
		&\|h^{\epsilon,\zeta}_X(v_{0,S})-h^{\epsilon,\zeta}_G(v_{0,S})\|_{X_m} + \|h^{\epsilon,\zeta}_{Y_F^{\zeta}}(v_{0,S})\|_{Y_m}\\
		&\leq C\left(\frac{\zeta^{n-m}}{(N_S^{\zeta}-N_F^{\zeta})^{\delta_Y}}+\zeta^{n-m+\gamma_X}\right)\|v_{0,S}\|_{Y_n}.
		\end{aligned}
	\end{align}
\end{theorem}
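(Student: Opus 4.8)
The plan is to compare the two fixed points by estimating the difference of the two Lyapunov--Perron operators $\mathscr{L}_{v_{0,S},\epsilon,\zeta}$ and $\mathscr{L}^G_{v_{0,S},\epsilon,\zeta}$ and then to exploit the uniform contraction estimate \eqref{Eq:Estimate_for_FixedPoint}. First I would work in the space $\mathcal{C}_{\eta,m}$ (the coarser scale, with index $m$) and note that both fixed points $(\bar u,\bar v_F,\bar v_S)$ and $(\bar u_G,\bar v_G)$ — the latter extended by zero on $X_F^\zeta$ and $Y_F^\zeta$ — lie in $\mathcal{C}_{\eta,m}$ because $m\le n$. Writing $w:=(\bar u-\bar u_G,\bar v_F,\bar v_S-\bar v_G)$, one has $w = \mathscr{L}_{v_{0,S},\epsilon,\zeta}(\bar u,\bar v_F,\bar v_S) - \mathscr{L}^G_{v_{0,S},\epsilon,\zeta}(\bar u_G,\bar v_G)$, which I would split as
\[
w = \big[\mathscr{L}_{v_{0,S},\epsilon,\zeta}(\bar u,\bar v_F,\bar v_S) - \mathscr{L}_{v_{0,S},\epsilon,\zeta}(\bar u_G,0,\bar v_G)\big] + \big[\mathscr{L}_{v_{0,S},\epsilon,\zeta}(\bar u_G,0,\bar v_G) - \mathscr{L}^G_{v_{0,S},\epsilon,\zeta}(\bar u_G,\bar v_G)\big].
\]
The first bracket is controlled by the Lipschitz constant of $\mathscr{L}_{v_{0,S},\epsilon,\zeta}$, which by \eqref{Eq:Estimate_for_FixedPoint} is some $\theta<1$, so it is bounded by $\theta\|w\|_{\mathcal{C}_{\eta,m}}$; absorbing this on the left gives $\|w\|_{\mathcal{C}_{\eta,m}} \le (1-\theta)^{-1}\|R\|_{\mathcal{C}_{\eta,m}}$, where $R$ denotes the second bracket, the genuine ``consistency error'' between the full and the Galerkin operator.

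Next I would estimate $R$. Its three components are: (i) in the $X$-component, $\epsilon^{-1}\int_{-\infty}^t e^{\epsilon^{-1}(t-s)A}\operatorname{pr}_{X_F^\zeta}f(\bar u_G(s),\bar v_G(s))\,\txtd s$; (ii) in the $Y_F^\zeta$-component, $\int_{-\infty}^t e^{(t-s)B}\operatorname{pr}_{Y_F^\zeta}g(\bar u_G(s),\bar v_G(s))\,\txtd s$; (iii) in the $Y_S^\zeta$-component, $0$, since both operators treat the slow-slow block identically. Component (i) lives in $X_F^\zeta$, so I can apply Assumption $(D)$: estimating first in $X_n$ using the holomorphic smoothing bound $\|e^{\epsilon^{-1}(t-s)A}\|_{\mathcal{B}(X_{n-1+\gamma_X},X_n)}\lesssim (\epsilon^{-1}(t-s))^{\gamma_X-1}e^{\epsilon^{-1}\omega_A(t-s)}$ together with the Fréchet bound on $f$ from $(A_n)$ and the bound $\|\bar u_G,\bar v_G\|_{\mathcal{C}_{\eta,n}}\lesssim\|v_{0,S}\|_{Y_n}$ (from \cite[Section 5]{Hummel_Kuehn_2020}), yields a bound of the form $C\epsilon^{\gamma_X}\|v_{0,S}\|_{Y_n}$ in the $X_n$-norm; then \eqref{Eq:Smoothness_for_decay} converts the $X_n$-bound into an $X_m$-bound at the cost of a factor $\zeta^{n-m}$. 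Combined with $\epsilon\lesssim\zeta$, the $\epsilon^{\gamma_X}$ becomes $\zeta^{\gamma_X}$, giving the $\zeta^{n-m+\gamma_X}$ term. For component (ii), $\operatorname{pr}_{Y_F^\zeta}g(\bar u_G,\bar v_G)$ sits in $Y_F^\zeta\cap Y_{n-1+\delta_Y}$, so I use the decay estimate from $(B_n)$\eqref{Assump:B_n:5}, $\|e^{tB}y_F\|_{Y_n}\le C_B t^{\delta_Y-1}e^{(N_F^\zeta+\zeta^{-1}\omega_A)t}\|y_F\|_{Y_{n-1+\delta_Y}}$, integrate against $e^{\eta t}$ using $\eta = \zeta^{-1}\omega_A + (N_S^\zeta+N_F^\zeta)/2$ so that the exponent in the resulting integral is $(N_F^\zeta - N_S^\zeta)/2<0$, and obtain after the gamma-function integral a factor $\sim (N_S^\zeta-N_F^\zeta)^{-\delta_Y}$; then apply $(D)$ again to pass from the $Y_n$-estimate to a $Y_m$-estimate, producing the $\zeta^{n-m}(N_S^\zeta-N_F^\zeta)^{-\delta_Y}$ term. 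Summing (i)--(iii) gives exactly the right-hand side of \eqref{Eq:Comparison_slow_manifolds} for $\|w\|_{\mathcal{C}_{\eta,m}}$.

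Finally, to pass from the estimate on $w$ to the estimate on the graphs themselves, I would plug the fixed points back into the formulas for $h_X^{\epsilon,\zeta}-h_G^{\epsilon,\zeta}$ and $h_{Y_F^\zeta}^{\epsilon,\zeta}$: both are integrals of $e^{-\epsilon^{-1}sA}(\cdots)$ resp. $e^{-sB}\operatorname{pr}_{Y_F^\zeta}(\cdots)$ over $(-\infty,0]$ evaluated at the respective fixed points. Writing $h_X^{\epsilon,\zeta}(v_{0,S}) - h_G^{\epsilon,\zeta}(v_{0,S})$ as $\epsilon^{-1}\int_{-\infty}^0 e^{-\epsilon^{-1}sA}[f(\bar u,\bar v_F,\bar v_S) - \operatorname{pr}_{X_S^\zeta}f(\bar u_G,\bar v_G)]\,\txtd s$, I split off the $\operatorname{pr}_{X_F^\zeta}f(\bar u_G,\bar v_G)$ piece (handled exactly as component (i) of $R$ above) and bound the remaining difference of $f$'s by $L_f$ times $\|w\|_{\mathcal{C}_{\eta,m}}$ via the mean value theorem, using the Fréchet bounds from $(A_m)$ and the decay of $\|w(s)\|$ built into the $\mathcal{C}_{\eta,m}$-norm; the time integral converges because $\eta<0$ relative to the semigroup decay rate. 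The $h_{Y_F^\zeta}^{\epsilon,\zeta}$ term is estimated analogously, using the $Y_F^\zeta$ decay bound from $(B_m)$\eqref{Assump:B_n:5}. In both cases the $\zeta$-powers from $(D)$ and the $\epsilon\lesssim\zeta$ substitution reproduce the claimed right-hand side.

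I expect the main obstacle to be bookkeeping the interplay of the two scales $m<n$ with the smoothing properties of the semigroups: one must consistently estimate the consistency error $R$ and the fixed-point difference $w$ in the fine-scale norm $\mathcal{C}_{\eta,n}$ first — to have access to the $Y_{n-1+\delta_Y}$-regularity of $\operatorname{pr}_{Y_F^\zeta}g$ and the $\|v_{0,S}\|_{Y_n}$-bound — and only then invoke Assumption $(D)$ to descend to the $m$-scale, all while tracking that the exponential weights $\eta$ and the contraction constant $\theta$ from \eqref{Eq:Estimate_for_FixedPoint} are uniform in $\epsilon,\zeta$ in the admissible range $c\frac{\omega_f}{\omega_A}\zeta>\epsilon$. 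A secondary subtlety is that the Galerkin fixed point $(\bar u_G,\bar v_G)$ must be compared against the full operator $\mathscr{L}_{v_{0,S},\epsilon,\zeta}$, which requires knowing $(\bar u_G,0,\bar v_G)\in\mathcal{C}_{\eta,n}$ and that $\mathscr{L}_{v_{0,S},\epsilon,\zeta}$ acts on it sensibly — this is where the zero-extension on the fast subspaces and the commutation of the projections with $A,B$ in $(B_n)$, $(C_n)$ are used.
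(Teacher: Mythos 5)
Your overall architecture --- comparing the two Lyapunov--Perron fixed points, splitting off a consistency error carried by the fast projections $\operatorname{pr}_{X_F^\zeta}f$ and $\operatorname{pr}_{Y_F^\zeta}g$, absorbing the Lipschitz part via the contraction constant from \eqref{Eq:Estimate_for_FixedPoint}, and descending from the $n$-scale to the $m$-scale with Assumption $(D)$ --- is essentially the paper's, and your treatment of the two $Y$-components matches \eqref{Eq:Ingredient2} and \eqref{Eq:Ingredient3}. There is, however, a genuine gap in your derivation of the $\zeta^{n-m+\gamma_X}$ term. You claim that the $X_n$-norm of
\[
\epsilon^{-1}\int_{-\infty}^t e^{\epsilon^{-1}(t-s)A}\operatorname{pr}_{X_F^\zeta}f(\bar u_G(s),\bar v_G(s))\,\txtd s
\]
is of order $\epsilon^{\gamma_X}$, obtained from the smoothing bound $\|e^{\epsilon^{-1}(t-s)A}\|_{\mathcal{B}(X_{n-1+\gamma_X},X_n)}\lesssim(\epsilon^{-1}(t-s))^{\gamma_X-1}e^{\epsilon^{-1}\omega_A(t-s)}$. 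This is not the case: carrying out the integration against the weight $e^{\eta s}$ gives
\[
\epsilon^{-\gamma_X}\int_0^\infty\sigma^{\gamma_X-1}e^{(\epsilon^{-1}\omega_A-\eta)\sigma}\,\txtd\sigma=\frac{\Gamma(\gamma_X)}{(\epsilon\eta-\omega_A)^{\gamma_X}},
\]
and since $\epsilon\eta-\omega_A\geq -\omega_A+c\,\omega_f=|\omega_A|(1-c\,\omega_f/\omega_A)>0$ uniformly in the admissible range $\epsilon<c\frac{\omega_f}{\omega_A}\zeta$, the prefactor $\epsilon^{-1}$ and the rescaled time in the semigroup exactly cancel the apparent gain and leave an $O(1)$ constant. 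With your argument the final estimate would only read $C\zeta^{n-m}\bigl(1+(N_S^\zeta-N_F^\zeta)^{-\delta_Y}\bigr)\|v_{0,S}\|_{Y_n}$, which misses the extra power $\zeta^{\gamma_X}$ entirely.

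The missing ingredient is Assumption $(C_n)$, which you invoke only for the commutation of projections. Since the integrand lies in $X_F^\zeta$, the $X$-analogue of $(B_n)$\eqref{Assump:B_n:5} provides the much faster decay $\|e^{\tau A}x_F\|_{X_n}\lesssim \tau^{\gamma_X-1}e^{(N_F^{\zeta}+\zeta^{-1}\omega_A)\tau}\|x_F\|_{X_{n-1+\gamma_X}}$, i.e.\ after time rescaling the rate $\epsilon^{-1}\zeta^{-1}\omega_A$ rather than $\epsilon^{-1}\omega_A$. The Gamma integral then produces $\Gamma(\gamma_X)\,(\epsilon\eta-\zeta^{-1}\omega_A)^{-\gamma_X}\sim(\zeta^{-1}|\omega_A|)^{-\gamma_X}=O(\zeta^{\gamma_X})$, which combined with the factor $\zeta^{n-m}$ from $(D)$ yields the claimed $\zeta^{n-m+\gamma_X}$; this is exactly how \eqref{Eq:Ingredient1} is obtained in the paper. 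Once this is corrected the rest of your argument goes through; note also that your final passage back to the graphs is already contained in the $\mathcal C_{\eta,m}$-estimate, since $\bar v_S(0)=\bar v_G(0)=v_{0,S}$, so evaluating at $t=0$ immediately gives \eqref{Eq:Comparison_slow_manifolds} without re-deriving the integral formulas.
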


\begin{proof}
	As above, let $(\bar{u},\bar{v}_F,\bar{v}_S)$ be the unique fixed point of $\mathscr{L}_{v_{0,S},\epsilon,\zeta}$ from \eqref{Eq:LyapunovPerron:1} and $(\bar{u}_G,\bar{v}_G)$ the one of $\mathscr{L}_{v_{0,S},\epsilon,\zeta}^G$ from \eqref{Eq:LyapunovPerron:2}. Then, using assumptions $(A_n)$, (iii) and (iv), and $(D)$, we have
	\begin{align*}
	&\qquad e^{-\eta t}(\| h^{\epsilon,\zeta}_X(\bar{v}_S(t))-h^{\epsilon,\zeta}_G(\bar{v}_G(t)) \|_{X_m})\\
	&\leq e^{-\eta t}\left\|\epsilon^{-1}\int_{-\infty}^t e^{-\epsilon^{-1}(t-s)A} \operatorname{pr}_{X_F^\zeta}f(\bar{u}(s),\bar{v}_F(s),\bar{v}_S(s))\,ds\right\|_{X_m}\\
	&\ +e^{-\eta t}\left\|\epsilon^{-1}\int_{-\infty}^t e^{-\epsilon^{-1}(t-s)A} \operatorname{pr}_{X_S^\zeta}\big[f(\bar{u}(s),\bar{v}_F(s),\bar{v}_S(s))-f(\bar{u}_G(s),\bar{v}_G(s))\big]\,ds\right\|_{X_m}\\
	&\leq C_{m,n} \zeta^{n-m} e^{-\eta t}\left\|\epsilon^{-1}\int_{-\infty}^t e^{-\epsilon^{-1}(t-s)A} \operatorname{pr}_{X_F^\zeta}f(\bar{u}(s),\bar{v}_F(s),\bar{v}_S(s))\,ds\right\|_{X_n}\\
	&\ +e^{-\eta t}\left\|\epsilon^{-1}\int_{-\infty}^t e^{-\epsilon^{-1}(t-s)A} \operatorname{pr}_{X_S^\zeta}\big[f(\bar{u}(s),\bar{v}_F(s),\bar{v}_S(s))-f(\bar{u}_G(s),\bar{v}_G(s))\big]\,ds\right\|_{X_m}\\
	&\leq L_fC_AC_{m,n}\zeta^{n-m}\int_{-\infty}^t\frac{e^{(\epsilon^{-1} \zeta^{-1}\omega_A-\eta)(t-s)}}{\epsilon^{\gamma_X}(t-s)^{1-\gamma_X}}\,ds \|(\bar{u},\bar{v}_F,\bar{v}_S)\|_{\mathcal C_{\eta,n}}\\
	&\ +L_fC_A\int_{-\infty}^t\frac{e^{(\epsilon^{-1} \omega_A-\eta)(t-s)}}{\epsilon^{\gamma_X}(t-s)^{1-\gamma_X}}\,ds \|(\bar{u}-\bar{u}_G,\bar{v}_F,\bar{v}_S-\bar{v}_G)\|_{\mathcal C_{\eta, m}}.
\end{align*}
It was shown in the proof of \cite[Proposition 5.2]{Hummel_Kuehn_2020} that the mapping
\[
	Y_S^{\zeta}\cap Y_k \to C_{\eta,k}, v_{0,S} \mapsto (\bar{u},\bar{v}_F,\bar{v}_S)
\]
is Lipschitz continuous. Let $L>0$ be the Lipschitz constant. Moreover, \cite[Lemma 2.2]{Hummel_Kuehn_2020} shows that
\[
	\int_{-\infty}^t\frac{e^{(\epsilon^{-1} \omega_A-\eta)(t-s)}}{\epsilon^{\gamma_X}(t-s)^{1-\gamma_X}}\,ds \leq \frac{\Gamma(\gamma_X)}{(\epsilon\eta-\omega_A)^{\gamma_X}}.
\]
Hence, we obtain that
\begin{align}
\begin{aligned}\label{Eq:Ingredient1}
	&e^{-\eta t}(\| h^{\epsilon,\zeta}_X(\bar{v}_S(t))-h^{\epsilon,\zeta}_G(\bar{v}_G(t)) \|_{X_m})	\\
	\leq \frac{LL_f C_AC_{m,n}\Gamma(\gamma_X)}{(\epsilon\eta-\zeta^{-1}\omega_A)^{\gamma_X}}&\zeta^{n-m}\|v_0\|_{Y_n}+\frac{L_f C_A\Gamma(\gamma_X)}{(\epsilon\eta-\omega_A)^{\gamma_X}}\|(\bar{u}-\bar{u}_G,\bar{v}_F,\bar{v}_S-\bar{v}_G)\|_{\mathcal C_{\eta, m}}.
	\end{aligned}
\end{align}
Furthermore, combining \cite[(5-3)]{Hummel_Kuehn_2020} with Assumption $(D)$ yields
\begin{align}
	\begin{aligned}\label{Eq:Ingredient2}
	 e^{-\eta t}\|h^{\epsilon,\zeta}_{Y_F^{\zeta}}(\bar{v}_S(t))\|_{Y_m}&\leq C_{m,n}\zeta^{n-m} e^{-\eta t}\|h^{\epsilon,\zeta}_{Y_F^{\zeta}}(\bar{v}_S(t))\|_{Y_n}\\
	 &\leq  \frac{2^{\delta_Y}C_{m,n}LL_gC_B\Gamma(\delta_Y)}{(N_S^{\zeta}-N_F^{\zeta})^{\delta_Y}}\zeta^{n-m}\|v_0\|_{Y_n}.
	 \end{aligned}
\end{align}
Concerning $\bar{v}_S-\bar{v}_G$, we observe with $(B_n)$, (ii) and (v), that
\begin{align}\begin{aligned}\label{Eq:Ingredient3}
	&e^{-\eta t}\| \bar{v}_S(t) - \bar{v}_G(t) \|_{Y_m}\\ & \quad = e^{-\eta t} \left\| \int_t^0 e^{(t-s)B}\operatorname{pr}_{Y_S^{\zeta}}\big[g(\bar{u}(s),\bar{v}_F(s),\bar{v}_S(s))-g(\bar{u}_G(s),\bar{v}_G(s))\big]\,ds\right\|_{Y_m}\\
& \quad \leq L_gC_B\int_t^0\zeta^{\delta_Y-1}e^{(t-s)(\zeta^{-1}\omega_A-\eta)}\,ds\|(\bar{u}-\bar{u}_G,\bar{v}_F,\bar{v}_S-\bar{v}_G)\|_{\mathcal C_{\eta,m}}\\
	& \quad \leq \frac{L_gC_B\zeta^{\delta_Y-1}}{N_S^{\zeta}-N_F^{\zeta}}\|(\bar{u}-\bar{u}_G,\bar{v}_F,\bar{v}_S-\bar{v}_G)\|_{\mathcal C_{\eta,m}}.
\end{aligned}\end{align}
Summing up \eqref{Eq:Ingredient1}, \eqref{Eq:Ingredient2} and \eqref{Eq:Ingredient3} yields
\begin{align*}
	&\quad e^{-\eta t}\big[\| h^{\epsilon,\zeta}_X(\bar{v}_S(t))-h^{\epsilon,\zeta}_G(\bar{v}_G(t)) \|_{X_m}+\|h^{\epsilon,\zeta}_{Y_F^{\zeta}}(\bar{v}_S(t))\|_{Y_m}+\| \bar{v}_S(t) - \bar{v}_G(t) \|_{Y_m}\big]\\
&\leq C_{m,n}\left(\frac{2^{\delta_Y}LL_gC_B\Gamma(\delta_Y)}{(N_S^{\zeta}-N_F^{\zeta})^{\delta_Y}}+\frac{LL_f C_A\Gamma(\gamma_X)}{(\epsilon\eta-\omega_A)^{\gamma_X}}\right)\zeta^{n-m}\|v_0\|_{Y_n}\\
&+ \left( \frac{L_f C_A\Gamma(\gamma_X)}{(\epsilon\eta-\omega_A)^{\gamma_X}}+ \frac{L_gC_B\zeta^{\delta_Y-1}}{N_S^{\zeta}-N_F^{\zeta}}\right)\|(h^{\epsilon,\zeta}_X(\bar{v}_S)-h^{\epsilon,\zeta}_G(\bar{v}_G),h^{\epsilon,\zeta}_{Y_F^{\zeta}}(\bar{v}_S(t)),\bar{v}_S-\bar{v}_G)\|_{\mathcal C_{\eta,m}}.
\end{align*}
Therefore, if we write
	\[
		\tilde{L}:=\left( \frac{L_f C_A\Gamma(\gamma_X)}{(\omega_A-\epsilon\eta)^{\gamma_X}}+ \frac{L_gC_B\zeta^{\delta_Y-1}}{N_S^{\zeta}-N_F^{\zeta}}\right),
	\]
	which is strictly smaller than $1$ by Assumption $(B_m)$, we obtain
	\begin{align*}
		&\|(h^{\epsilon,\zeta}_X(\bar{v}_S)-h^{\epsilon,\zeta}_G(\bar{v}_G),h^{\epsilon,\zeta}_{Y_F^{\zeta}}(\bar{v}_S),\bar{v}_S-\bar{v}_G)\|_{\mathcal C_{\eta,m}} \\ &\leq\frac{C_{m,n}}{1-\tilde{L}}\left(\frac{2^{\delta_Y}LL_gC_B\Gamma(\delta_Y)}{(N_S^{\zeta}-N_F^{\zeta})^{\delta_Y}}+\frac{LL_f C_A\Gamma(\gamma_X)}{(\zeta^{-1}\omega_A-\epsilon\eta)^{\gamma_X}}\right)\zeta^{n-m}\|v_0\|_{Y_n}.
	\end{align*}
	The fact that
	\begin{align*}
	&\|h^{\epsilon,\zeta}_X(v_{0,S})-h^{\epsilon,\zeta}_G(v_{0,S})\|_{X_m} + \|h^{\epsilon,\zeta}_{Y_F^{\zeta}}(v_{0,S})\|_{Y_m} \\
& \qquad	\leq  \|(h^{\epsilon,\zeta}_X(\bar{v}_S)-h^{\epsilon,\zeta}_G(\bar{v}_G),h^{\epsilon,\zeta}_{Y_F^{\zeta}}(\bar{v}_S),\bar{v}_S-\bar{v}_G)\|_{C_{\eta,m}}
	\end{align*}
	finally yields the assertion.
\end{proof}

\section{Case study of an explicit reaction-diffusion problem}
\label{sec:case_study}
As discussed in Section~\ref{Sec:Galerkin}, in certain situations of interest the spaces $X_S^{\zeta}$ and $Y_S^{\zeta}$ are $N_A$-dimensional and $N_B$-dimensional with $N_A$ and $N_B$ being the number of eigenvalues including multiplicities of $A$ and $B$, respectively, in $\{z\in\C: \zeta^{-1}\omega_A+N_S^{\zeta}\leq\Re(z)\}$. In a Galerkin approach, one usually studies the limit $N_A,N_B\to\infty$ which corresponds to $\zeta\to0$. However, when we fix $\epsilon > 0$, the condition $ (c \, \omega_A/\omega_f)\zeta>\epsilon$ for some $c\in(0,1)$, as posed in Theorem~\ref{Thm:Main}, will not be satisfied in the limit $\zeta\to0$. In such a situation, the existence of slow manifolds in the sense of \cite{Hummel_Kuehn_2020} is unclear. Moreover, in the limit $\zeta\to0$ the interpretation of \eqref{Eq:Fast-Slow_Galerkin} as a classical finite-dimensional fast-slow system will be lost, since the lower order modes in the fast variable might evolve at a slower time scale than the higher order modes in the slow variable. Therefore, one may ask whether slow manifolds for \eqref{Eq:Fast-Slow_Galerkin} still exist in a suitable sense if $\zeta\in(0,\epsilon \, \omega_f/(c \,\omega_A ) )$.

In a general setting, it is not clear whether this holds true or not. However, in certain situations it is possible to explicitly derive invariant manifolds for \eqref{Eq:Fast-Slow_Galerkin} which resemble slow manifolds from the classical finite-dimensional theory. Using such a computation, we discuss the intricacies of the limit $\zeta\to0$  at the hand of an example, also providing an explicit estimate of the form~\eqref{Eq:Comparison_slow_manifolds}.

\begin{example} \label{Ex:explicit_calculations}
	(i) \textbf{Explicit computation of slow manifolds}: consider the following fast-slow system
\begin{align}\label{Eq:Fast_Slow:Example}
	\begin{aligned}
	\epsilon\partial_t u^{\epsilon}&=\Delta u^{\epsilon} - u^{\epsilon} + (v^{\epsilon})^2,\\
	\partial_t v^{\epsilon}&=\Delta v^{\epsilon}-v^{\epsilon},
	\end{aligned}
\end{align}
on the torus $\mathbb{T}$. A natural approach for a Galerkin approximation is to truncate to a certain number of Fourier modes. Writing 
	\[
		u_k^{\epsilon}(t):=\int_{\mathbb{T}} u^{\epsilon}(t,x) e^{-2\pi i k x}\,dx,\quad v_k^{\epsilon}(t):=\int_{\mathbb{T}} v^{\epsilon}(t,x) e^{-2\pi i k x}\,\txtd x\quad(k\in\Z),
	\]
	we can expand 
\[
	u^{\epsilon}(t,x)=\sum_{k\in\Z} u_k^{\epsilon}(t) e^{2\pi i k x},\quad v^{\epsilon}(t,x)=\sum_{k\in\Z} v_k^{\epsilon}(t) e^{2\pi i k x}.
\]
Applying $\langle \cdot, e^{2\pi i k x} \rangle_{L_2(\mathbb{T})}$ to both sides of \eqref{Eq:Fast_Slow:Example} yields
\begin{align}\label{Eq:Fast_Slow_Fourier_Series_Slow}
	\begin{aligned}
	\epsilon\partial_t u^{\epsilon}_k&=-(1+4\pi^2k^2)u^{\epsilon}_k + \sum_{j,l\in\Z, j+l=k} v_j^{\epsilon}v_l^{\epsilon},\\
	\partial_t v^{\epsilon}_k&=-(1+4\pi^2k^2)v^{\epsilon}_k
	\end{aligned}
\end{align}
for all $k\in\Z$. Truncating at a certain $k_0\in\N$, we obtain
\begin{align}\label{Eq:Fast_Slow_Fourier_Series_Slow_Truncation}
	\begin{aligned}
	\epsilon\partial_t u^{\epsilon}_k&=-(1+4\pi^2k^2)u^{\epsilon}_k + \sum_{j,l\in\Z,\;|j|,|l|\leq k_0,\atop j+l=k} v_j^{\epsilon}v_l^{\epsilon},\\
	\partial_t v^{\epsilon}_k&=-(1+4\pi^2k^2)v^{\epsilon}_k,
	\end{aligned}
\end{align}
which is a finite-dimensional fast-slow ODE for sufficiently small $\epsilon>0$.
We can directly solve the slow equation by
\begin{align*}
	v_k^{\epsilon}(t)=e^{-(1+4\pi^2k^2)t}v_k^{\epsilon}(0),
\end{align*}
and, if $[\epsilon^{-1}(1+4\pi^2k^2)-2-4\pi^2(j^2+l^2)]\neq0$, the fast equation is solved by
\begin{align*}
	&u_k^{\epsilon}(t) - e^{-\epsilon^{-1}(1+4\pi^2k^2)t}u_k^{\epsilon}(0)\\
	&=\sum_{j,l\in\Z,\;|j|,|l|\leq k_0,\atop j+l=k} \epsilon^{-1}\int_0^t e^{-\epsilon^{-1}(1+4\pi^2k^2)(t-s)}v_j^{\epsilon}(s)v_l^{\epsilon}(s)\, \txtd s\\
	&=\sum_{j,l\in\Z,\;|j|,|l|\leq k_0,\atop j+l=k} \epsilon^{-1}e^{-\epsilon^{-1}(1+4\pi^2k^2)t}\int_0^t e^{[\epsilon^{-1}(1+4\pi^2k^2)-2-4\pi^2(j^2+l^2)]s}v_j^{\epsilon}(0)v_l^{\epsilon}(0)\, \txtd s\\
	&=\sum_{j,l\in\Z,\;|j|,|l|\leq k_0,\atop j+l=k}\epsilon^{-1}\frac{e^{-2-4\pi^2(j^2+l^2)]t}-e^{-\epsilon^{-1}(1+4\pi^2k^2)t}}{\epsilon^{-1}(1+4\pi^2k^2)-2-4\pi^2(j^2+l^2)}v_j^{\epsilon}(0)v_l^{\epsilon}(0).
\end{align*}
The essential property of a slow manifold is to eliminate the fast dynamics. In fact, cancelling out the terms with the $\epsilon^{-1}$ in the exponent, we obtain 
\begin{align}\label{Eq:Slow_Manifold}
	u_k^{\epsilon}(0)=\sum_{j,l\in\Z,\;|j|,|l|\leq k_0,\atop j+l=k}\frac{v_j^{\epsilon}(0)v_l^{\epsilon}(0)}{1+4\pi^2k^2-\epsilon[2+4\pi^2(j^2+l^2)]},
\end{align}
which could be seen as a formula for the slow manifold. The critical manifold in turn would be given by
\begin{align}\label{Eq:Critical_Manifold}
	u_k^{0}(0)=\sum_{j,l\in\Z,\;|j|,|l|\leq k_0,\atop j+l=k}\frac{v_j^{0}(0)v_l^{0}(0)}{1+4\pi^2k^2}.
\end{align}
Let $\mathcal{M}_{k_0}$ be the set of all $\epsilon\in(0,1)$ such that there are $(j,k)\in\Z^2$ with $ \max\{|j|,|l|\}\leq k_0$ and $k\in\Z$, $|k|\leq k_0$ with 
\begin{align} \label{eq:epsilon_k0}
	\epsilon^{-1}(1+4\pi^2k^2)-2-4\pi^2(j^2+l^2)=0,
\end{align}
i.e., $\mathcal{M}_{k_0}$ contains all $\epsilon\in(0,1)$ for which there may be singularities in \eqref{Eq:Slow_Manifold}. This set is special since the above procedure of cancelling out the terms with an $\epsilon^{-1}$ in the exponent is not possible for such $\epsilon$; note that a similar situation occurs  in many dynamical systems in the context of resonances and the small divisor problem~\cite{Yoccoz}. 
Although the existence of invariant manifolds is not clear for $\epsilon\in \mathcal{M}_{k_0}$, we observe that $\mathcal{M}_{k_0}$ is finite if $k_0\in\N$ and countable with an accumulation point at $0$ if $k_0=\infty$. This shows two things: firstly, for all but countably many $\epsilon\in(0,1)$ there exists an invariant manifold as a graph over the whole slow variable space for \eqref{Eq:Fast_Slow:Example}. Secondly, it seems like there is no $\epsilon_0$ such that such an invariant manifold exists for all $\epsilon\in(0,\epsilon_0]$. Instead, 
one has to restrict to a subset of the slow variable space,
as also suggested by the direct approach. In this example, one has to impose 
\[
	\sum_{(j,k)\in \mathcal{L}_{k_0,k }} v_j^{\epsilon}(0)v_l^{\epsilon}(0)=0,
\]
where $\mathcal{L}_{k_0,k }$ denotes the set of all pairs $(j,l)\in\Z^2$ with $j+l=k$ and $|j|,|l|\leq k_0$ such that the denominator in \eqref{Eq:Slow_Manifold} is equal $0$.

Even though an invariant manifold for \eqref{Eq:Fast_Slow:Example} exists for all but countably many $\epsilon\in(0,1)$, these manifolds can be far away from the critical manifold. In fact, this distance tends to $\infty$ as $\dist(\epsilon, \mathcal M_{k_0}) \to 0$, as can be seen directly from \eqref{Eq:Slow_Manifold} and \eqref{Eq:Critical_Manifold}.
However, it is easy to see from equation~\eqref{eq:epsilon_k0} that there are no singularities in \eqref{Eq:Fast_Slow:Example} if $\epsilon < 1/(2+8\pi^2k_0^2)$. In this case, the slow manifold from the Galerkin approximation is close to the slow manifold obtained by the direct approach. This can be checked by computing the slow manifold for \eqref{Eq:Fast_Slow:Example} from the Fourier coefficients as above. In our abstract framework, we obtain the following precise estimate.

\medskip

(ii) \textbf{Exemplification of abstract framework}: one can choose $X=L_2(\mathbb{T})$ and $Y=H^2(\mathbb{T})$ as underlying spaces and $A=\Delta-1$ and $B=\Delta-1$ on the domains $D(A)=H^2(\mathbb{T})$ and $D(B)=H^4(\mathbb{T})$, respectively. Then, we have $X_{\alpha}=H^{2\alpha}(\mathbb{T})$ and $Y_{\alpha}=H^{2+2\alpha}(\mathbb{T})$. As nonlinearities, we choose $f(x,y):=y^2$ and $g(x,y)=0$ with $\gamma_X=\delta_X=\delta_Y=1$. If $n\geq1$, then 
\[
	f\colon X_{n}\times Y_{n-1}\to X_{n},\,(x,y)\mapsto y^2
\]
is a well-defined and smooth nonlinearity, since $H^{2+2\alpha}(\mathbb{T})$ is a Banach algebra for $\alpha>-\frac{3}{4}$. However, the bounds on the derivatives of $f$ from Assumption $(A_n)$ are only satisfied locally and the Lipschitz constant only gets small in a neighborhood around $0$ in $Y_{n}$. Formally, one would have to use cutoff techniques as for example in \cite[Section 6]{Hummel_Kuehn_2020} in order to apply our methods. But since globabl stability issues are not our primary concern, we omit the details here. Instead, we just keep in mind that we have to restrict to a certain neighborhood around $0$ in $Y_{n}$ so that $L_f$ is small enough for \eqref{Eq:Estimate_for_FixedPoint} to hold.

For Assumption $(B_n)$ we need to introduce a splitting of the slow variable space. Let $\omega_A\in(-1,0)$ be close to $-1$. For $k_0\in\N_0$ with $$-4\pi^2(|k_0|+2)^2<\zeta^{-1}\omega_A+1\leq -4\pi^2(|k_0|+1)^2,$$
 we take 
\begin{align*}
	X_S^{\zeta}=Y_S^{\zeta}&:=\operatorname{span}\{[x\mapsto e^{i2\pi k x}]:k\in\Z,\;|k|\leq|k_0|\},\\
	Y_F^{\zeta}&:=\operatorname{cl}_{H^2(\mathbb{T})}\big(\operatorname{span}\{[x\mapsto e^{i2\pi k x}]:k\in\Z,\;|k|\geq|k_0|+1\}\big), \\
	X_F^{\zeta}&:=\operatorname{cl}_{L_2(\mathbb{T})}\big(\operatorname{span}\{[x\mapsto e^{i2\pi k x}]:k\in\Z,\;|k|\geq|k_0|+1\}\big), 
\end{align*}
where $\operatorname{cl}_{T}M$ denotes the closure of a set $A\subset T$ in a topological space $T$. Note that the projection to $X_S^{\zeta}$ and $Y_S^{\zeta}$ coinides with the projection to the first $k_0$ Fourier modes. Thus, our abstract Galerkin equation \eqref{Eq:Fast-Slow_Galerkin} is consistent with the explicit example \eqref{Eq:Fast_Slow_Fourier_Series_Slow_Truncation}. Now it is straightforward to check that the assumptions $(B_n)$, $(C_n)$ and $(D)$ are satisfied. Nevertheless, let us specify the choice of $N_F^{\zeta}$ and $N_S^{\zeta}$. Note that we have
\[
	e^{tB}f=e^{t(\Delta-1)}f=\left[x\mapsto \sum_{k\in\Z} e^{-(4\pi^2 k^2+1)t}\hat{f}(k)e^{i2\pi k x}\right].
\]
Thus, for $y_S\in Y_S^{\zeta}$ and $t\geq0$, Plancherel's Theorem gives
\[
	\|e^{-tB}y_S\|_{H^{2+2n}(\mathbb{T})}\leq e^{(4\pi^2|k_0|^2+1)t}\|y_S\|_{H^{2+2n}(\mathbb{T})},
\]
so that we may take
\[
	N_S^{\zeta}:=-\zeta^{-1}\omega_A-4\pi^2|k_0|^2-1
\]
Since $-4\pi^2(|k_0|+2)^2<\zeta^{-1}\omega_A+1\leq -4\pi^2(|k_0|+1)^2$, it follows that $N_S^{\zeta}>0$. Similarly, we can take 
\[
	N_F^{\zeta}:=-\zeta^{-1}\omega_A-4\pi^2(|k_0|+1)^2-1.
\]
With these choices, we observe that formula \eqref{Eq:Slow_Manifold} defines the slow manifold which one also obtains from a Lyapunov-Perron approach. Indeed, the solution of equation \eqref{Eq:Fast_Slow_Fourier_Series_Slow_Truncation}, with initial conditions given by \eqref{Eq:Slow_Manifold}, reads
\begin{align*}
		u_k^{\epsilon}(t)&=\sum_{j,l\in\Z,\;|j|,|l|\leq k_0,\atop j+l=k}e^{-[2-4\pi^2(j^2+l^2)]t}\frac{v_j^{\epsilon}(0)v_l^{\epsilon}(0)}{1+4\pi^2k^2-\epsilon[2+4\pi^2(j^2+l^2)]},\\
		v_k^{\epsilon}(t)&=e^{-(1+4\pi^2k^2)t}v_k^{\epsilon}(0),
\end{align*}
also for $t\in(-\infty,0]$. Moreover, this solution is an element of $\mathcal C_{\eta,n}^G$ and hence, \eqref{Eq:Slow_Manifold} defines the slow manifold given as the graph of $h_G^{\epsilon,\zeta}$ from the abstract setting. In particular, Theorem~\ref{Thm:Main} shows that the distance of the Galerkin slow manifold we computed for \eqref{Eq:Fast_Slow_Fourier_Series_Slow_Truncation} to the actual slow manifold for \eqref{Eq:Fast_Slow:Example} is small if $\zeta,\epsilon>0$ with $c\frac{\omega_A}{\omega_f}\zeta>\epsilon$ are small enough. More precisely, if we fix $m,n\in\N$, $m\leq n$ and $c\in (0,1)$, then Theorem~\ref{Thm:Main} tells us that there is a constant $C>0$ such that for all $\epsilon>0$ small enough and all $k_0\in\N$ such that $k_0<\sqrt{c\omega_f/(4\pi^2\epsilon)}-2$ it holds that
\[
	\| h_X^{\epsilon,\zeta}(v_{0,S})-h_G^{\epsilon,\zeta}(v_{0,S}) \|_{H^{2m}(\mathbb{T})} < C k_0^{-2(n-m)-1}\|v_{0,S}\|_{H^{2n+2}(\mathbb{T})} .
\]
Here, $h_X^{\epsilon,\zeta}$ denotes the mapping describing the slow manifold for \eqref{Eq:Fast_Slow:Example} from the direct approach and $h_G^{\epsilon,\zeta}$ denotes the slow manifold from the Galerkin approach defined by \eqref{Eq:Slow_Manifold}. If $k_0$ is chosen close enough to $\sqrt{c\omega_f/(4\pi^2\epsilon)}-2$, then we also obtain the estimate
 \[
	\| h_X^{\epsilon,\zeta}(v_{0,S})-h_G^{\epsilon,\zeta}(v_{0,S}) \|_{H^{2m}(\mathbb{T})} \lesssim \epsilon^{n-m+\frac{1}{2}}\|v_{0,S}\|_{H^{2n+2}(\mathbb{T})} .
\] 
In particular, the last estimate provides an illustration of the relevance of our main result: 
in situations where a Galerkin approximation may be the procedure of choice due to the need of using ODE techniques or for numerical reasons, we know that for sufficiently small $\epsilon$ and suitably chosen $k_0$ the finite-dimensional Galerkin manifolds are good approximations of the invariant slow manifolds for the PDE, if the appropriate norms are taken.
\end{example}

\appendix

\section{Interpolation-Extrapolation Scales} \label{sec:int_ext_scales}

We briefly recall the notion of interpolation-extrapolation scales and related results. As a general reference, see \cite[Chapter V]{Amann_1995}.
Let $T\colon X\supset D(T)\to X$ be a densely defined closed linear operator on a Banach space $X$ with $0\in\rho(T)$, where $\rho(T)$ denotes its resolvent set. Moreover, for $\theta\in(0,1)$ let $(\cdot,\cdot)_{\theta}$ be an exact admissible interpolation functor, i.e.~an exact interpolation functor such that $X_1$ is dense in $(X_0,X_1)_{\theta}$ whenever $X_1\stackrel{d}{\hookrightarrow} X_0$ (i.e.~the injection is continuous with dense range). We define a family of Banach spaces $(X_{\alpha})_{\alpha\in[-1,\infty)}$ and a family of operators $(T_{\alpha})_{\alpha\in[-1,\infty)}\in\mathcal{B}(X_{\alpha+1},X_{\alpha})$ as follows:
 \begin{itemize}
  \item For $n\in\N_0$ we choose $X_n:=D(T^n)$ endowed with $\|x\|_{X_n}:=\|T^n x\|_X$ $(x\in D(T^n))$. In particular, $X_0=D(T^0)=D(\operatorname{id}_X)=X$. Moreover, $T_n:=T\vert_{X_{n+1}}$.
  \item $X_{-1}$ is defined as the completion of $X=X_0$ with respect to the norm $\|x\|_{X_{-1}}=\| T^{-1} x \|_{X_0}$. The operator $T_{0}=T$ is then closable on $X_{-1}$ and $T_{-1}$ is defined to be the closure. One can also define  $(X_{-n},T_{-n})$ for $n\in\N$ by iteration, but we do not go beyond $n=-1$ in this paper.
  \item For $n\in\N_0\cup\{-1\}$, $\theta\in(0,1)$ and $\alpha=n+\theta$ we define $X_{\alpha}:=(X_{n},X_{n+1})_{\theta}$ and $T_{\alpha}=T_n\vert_{D(T_{\alpha})}$ where
  \[
   D(T_{\alpha})=\{x\in X_{n+1}: T_n x\in X_{\alpha}\}.
  \]
 \end{itemize}
The family $(X_{\alpha},T_{\alpha})_{\alpha\in[-1,\infty)}$ is a densely injected Banach scale in the sense that
$
 X_{\alpha}\stackrel{d}{\hookrightarrow} X_{\beta}
$
whenever $\alpha\geq\beta$,  and
$
 T_{\alpha}\colon X_{\alpha+1}\to X_{\alpha}
$
is an isomorphism for all $\alpha\in\R$. Moreover $T_{\alpha}\colon X_{\alpha}\supset X_{\alpha+1}\to X_{\alpha}$ is a densely defined closed linear operator with $0\in\rho(T_{\alpha})$ for all $\alpha\in\R$. The family $(X_{\alpha},T_{\alpha})_{\alpha\in\R}$ is an interpolation-extrapolation scale.
%

\end{document}